\theoremstyle{plain}
\newtheorem{mainthm}{Theorem}
\newtheorem*{conj*}{Conjecture}
\newtheorem*{cor*}{Corollary}
\newtheorem{theorem}{Theorem}[section]
\newtheorem{proposition}[theorem]{Proposition}
\newtheorem{corollary}[theorem]{Corollary}
\newtheorem{lemma}[theorem]{Lemma}
\newtheorem{question}{Question}
\theoremstyle{definition}
\newtheorem*{def*}{Definition}
\newtheorem{example}[theorem]{Example}
\newtheorem{definition}[theorem]{Definition}
\newcommand{\eps}{\varepsilon}
\renewcommand{\epsilon}{\varepsilon}
\newcommand{\Z}{\mathbb{Z}}
\newcommand{\N}{\mathbb{N}}
\newcommand{\dist}{\operatorname{\textit{d}}}
\title[Gluing-orbit and local stable/unstable sets]{Gluing-orbit property, local stable/unstable sets, and induced dynamics on hyperspace.}
\author[Mayara Antunes, Bernardo Carvalho, Welington Cordeiro, José Cueto]{Mayara Antunes, Bernardo Carvalho, Welington Cordeiro, José Cueto}
\date{\today}
\thanks{2020 \emph{Mathematics Subject Classification}: Primary 37D10, 37B40, 37B45, Secondary 37B99.}
\keywords{Gluing-orbit, stable/unstable sets, hyperspace.}
\keywords{}
\begin{document}
\begin{abstract}
We prove that local stable/unstable sets of homeomorphisms of an infinite compact metric space satisfying the gluing-orbit property always contain compact and perfect subsets of the space. As a consequence, we prove that if a positively countably expansive homeomorphism satisfies the gluing-orbit property, then the space is a single periodic orbit. We also prove that there are homeomorphisms with gluing-orbit such that its induced homeomorphism on the hyperspace of compact subsets does not have gluing-orbit, contrasting with the case of the shadowing and specification properties, proving that if the induced map has gluing-orbit, then the base map has gluing-orbit and is topologically mixing.
\end{abstract}

\maketitle

\section{Introduction and statement of results}


This article aims to study the structure of local stable/unstable sets of homeomorphisms with the gluing-orbit property and to characterize this property for induced maps on the hyperspace of compact subsets. The structure of local stable/unstable sets of a system is central to the study of chaotic dynamics. In hyperbolic systems, the existence of local stable/unstable manifolds is an important feature from where a big part of their dynamics can be understood, see \cite{hirsch1977manifold} for information on the Stable Manifold Theorem.
The idea of using expansiveness and its generalizations to obtain non-trivial local stable/unstable sets comes back to works of Hiraide/Lewowicz of the classification of expansive homeomorphisms on surfaces \cite{hiraide1990expansive,lewowicz1989expansive}, Ma\~n\`e \cite{mane1979expansive}, and Kato \cite{kato1, kato2} in the case of continuum-wise expansive homeomorphisms. More recently, sensitivity and shadowing were used to the same objective, instead of expansiveness \cite{antunes2023sensitivity} (see Theorem \ref{teoremaACT}). It is proved that sensitive homeomorphisms with shadowing have compact and perfect subsets within each local stable/unstable set. Our first main result generalizes this theorem assuming the gluing-orbit property instead of sensitivity and shadowing.

Gluing-orbit was introduced by Bomfim and Varandas in \cite{bomfim2019gluing} and further explored in \cite{bomfim2021gluing} and \cite{sun2020zero} (see Definition \ref{gluingorbit} for the precise definition). It can be seen, in one hand, as a weaker form of specification property that implies transitivity but not necessarily mixing, and in the other hand, for transitive systems it can also be seen as a form of pseudo-orbit tracing property that is weaker than the shadowing property. Indeed, transitive homeomorphisms with shadowing are among the examples of maps satisfying the gluing-orbit property, though there are examples without shadowing such as the irrational rotation on the circle and more generally the minimal equicontinuous maps \cite{sun2020zero}. We use $W^s_\eps(x)$ and $W^u_{\eps}(x)$ to denote the $\eps-$stable and $\eps-$unstable sets of $x$ (see Section 2 for definitions). The following is our first main result.

\begin{mainthm}\label{B} If $f\colon X\to X$ is a homeomorphism of an infinite compact metric space $X$ satisfying the gluing-orbit property, then there is $\eps_0>0$ such that for each $\eps\in(0,\eps_0)$ and each $x\in X$ there are compact and perfect sets $$C_x\subset W^s_\varepsilon(x)\;\;\mbox{ and }\;\;D_x\subset W^u_\varepsilon(x).$$ 
\end{mainthm}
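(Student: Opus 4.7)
Since the gluing-orbit property is symmetric in time (so $f^{-1}$ also satisfies it), it suffices to construct $C_x \subset W^s_\varepsilon(x)$; the set $D_x \subset W^u_\varepsilon(x)$ follows by applying the same argument to $f^{-1}$. Fix $x \in X$ and $\varepsilon\in(0,\varepsilon_0)$; choose $\delta>0$ small enough that, by uniform continuity of $f^j$ for $0\leq j\leq N(\delta)$, any $\delta$-close pair remains $\varepsilon/2$-close under the next $N(\delta)$ iterates, and write $N=N(\delta)$. Gluing-orbit implies topological transitivity, and a transitive homeomorphism of an infinite compact metric space has no isolated points (an isolated point in a transitive orbit must itself be periodic, forcing a finite dense orbit and hence a finite $X$). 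In particular $\diam(X)>0$, and by taking $\varepsilon_0$ small enough we may fix two points $z_0,z_1\in X$ with $d(z_0,z_1)>4\delta$.

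The plan is to inject the Cantor space $\{0,1\}^{\mathbb{N}}$ into $W^s_\varepsilon(x)$ so that it becomes uncountable; then, since $W^s_\varepsilon(x)$ is closed in the compact space $X$, the Cantor-Bendixson theorem provides a non-empty compact perfect subset, which we take as $C_x$. For each finite word $\omega=(\omega_1,\ldots,\omega_k)\in\{0,1\}^k$ and integer $T\geq 1$, apply the gluing-orbit property at scale $\delta$ to the sequence $(z_{\omega_1},\ldots,z_{\omega_k},x)$ with segment lengths $(1,\ldots,1,T)$. This yields a point $y^{k,T,\omega}$ and gaps $(p_1,\ldots,p_k)\in\{0,\ldots,N\}^k$ such that, writing $s_i:=(i-1)+\sum_{j<i}p_j$ and $s_{k+1}:=k+\sum_{j\leq k}p_j$, the orbit passes $\delta$-close to $z_{\omega_i}$ at time $s_i$ and $\delta$-shadows $x$'s forward orbit for $T$ steps starting at time $s_{k+1}$. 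Translating, the point $w^{k,T,\omega}:=f^{s_{k+1}}(y^{k,T,\omega})$ lies in $\overline{B(x,\delta)}$, forward $\delta$-shadows $x$ for $T$ iterates, and satisfies $d(f^{-(s_{k+1}-s_i)}(w^{k,T,\omega}),z_{\omega_i})\leq\delta$ at bounded backward times $\tau_i\leq k(N+1)$, thereby encoding $\omega$ in its backward orbit.

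For fixed $k$ and $\omega$, pigeonhole on the finitely many gap tuples and compactness of $X$ yield a subsequence $T_m\to\infty$ along which the gap configuration stabilizes and $w^{k,T_m,\omega}\to w^{k,\omega}$. Passing to the limit, continuity gives $d(f^j(w^{k,\omega}),f^j(x))\leq\delta<\varepsilon$ for all $j\geq 0$, so $w^{k,\omega}\in W^s_\varepsilon(x)$, and the backward encoding at the times $\tau_i$ is preserved. A nested diagonal extraction as $k\to\infty$ then assigns, to each $\omega\in\{0,1\}^{\mathbb{N}}$, a limit point $w^\omega\in W^s_\varepsilon(x)$. For $\omega\neq\omega'$ differing at some coordinate $j$, the backward iterates of $w^\omega$ and $w^{\omega'}$ at the stabilized encoding time land within $\delta$ of $z_{\omega_j}$ and $z_{\omega'_j}$ respectively; since $d(z_0,z_1)>4\delta$, these limit points are distinct. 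Thus $\omega\mapsto w^\omega$ is injective, $W^s_\varepsilon(x)$ is uncountable, and Cantor-Bendixson furnishes the desired $C_x$.

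The main obstacle lies in the diagonal extraction across $k$: the backward encoding times $\tau_j^{(k)}$ depend on the gap tuples chosen at each stage and, in the natural construction above, grow with $k$, threatening to push the encoded visit to $z_{\omega_j}$ out to infinity in the limit $w^\omega$ and thereby wash out the information that distinguishes distinct infinite words. The technical heart of the proof is to arrange the nested subsequences (or to modify the pseudo-orbit, e.g.\ by interleaving the $z$-visits between long $x$-shadowing blocks) so that for every fixed $j$ the encoding time stays uniformly bounded along the extraction, ensuring that the $z_{\omega_j}$-visit is genuinely witnessed by $w^\omega$ and that the injection $\omega\mapsto w^\omega$ survives the double passage to the limit.
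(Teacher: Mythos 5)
Your overall strategy---encode binary words in the backward orbit of points that forward-shadow $x$, pass to limits, and extract a perfect set from an uncountable closed $W^s_\varepsilon(x)$ via Cantor--Bendixson---founders on exactly the point you flag in your last paragraph, and that difficulty is not one that can be arranged away by a cleverer choice of subsequences: it is structural. In the pseudo-orbit $(z_{\omega_1},\dots,z_{\omega_k},x)$ the visit to $z_{\omega_j}$ must precede the visits to $z_{\omega_{j+1}},\dots,z_{\omega_k}$, which in turn must precede the $x$-shadowing block, so the backward encoding time for coordinate $j$ is $s_{k+1}-s_j=\sum_{i=j}^{k}(1+t_i)\geq k-j$, which tends to infinity with $k$ for every fixed $j$. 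The alternative you propose---interleaving the $z$-visits between long $x$-shadowing blocks---places the excursions to $z_{\omega_j}$ in the \emph{forward} orbit of the traced point, which then cannot lie in $W^s_\varepsilon(x)$. Consequently, in the limit $w^\omega$ all the distinguishing visits have receded to $-\infty$, injectivity of $\omega\mapsto w^\omega$ is not established, and uncountability of $W^s_\varepsilon(x)$ does not follow. Two secondary issues: even at a fixed finite stage $k$, distinct words may be traced with different gap tuples, so their encoding times differ and the claim $w^{k,\omega}\neq w^{k,\omega'}$ is not yet justified; and the choice of $\delta$ in terms of $N(\delta)$ is circular as written.

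The paper's proof sidesteps the limit problem by never taking limits of labelled points. It first splits via an Auslander--Yorke-type dichotomy (gluing-orbit implies $f$ is either minimal and equicontinuous or sensitive); in the equicontinuous case a closed ball $\overline{B(x,\delta)}$ already sits inside $W^s_\varepsilon(x)\cap W^u_\varepsilon(x)$ and is perfect because $X$ is. In the sensitive case it uses sensitivity to choose, for each already-constructed point $y$, a point $y_1\notin W^s_\varepsilon(f^n(y))$ for all $n\leq M$, and gluing-orbit (gluing a long backward piece of the orbit of $y$ to the forward orbit of $y_1$, then passing to a limit in the single parameter $l$) to produce a companion $c(y)\in W^u_\varepsilon(x)$ with $0<d(c(y),y)<\varepsilon/2^k$ and distinct from all previously constructed points. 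This gives an increasing sequence of finite sets $C_k\subset W^u_\varepsilon(x)$ with $2^k$ elements in which every point acquires arbitrarily close distinct companions at later stages, and $C_x=\overline{\bigcup_k C_k}$ is compact and perfect with no need for an injection of $\{0,1\}^{\mathbb N}$. If you want to salvage your argument, replacing the Cantor--Bendixson endgame by this ``closure of an increasing union with accumulating companions'' device is the natural repair; the separation you need at each finite stage must then come from sensitivity rather than from backward-orbit encoding.
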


In the proof, we obtain a generalization of the classical Auslander-Yorke dichotomy \cite{ay} which says that a minimal map is either equicontinuous or sensitive. We prove a similar result using gluing-orbit instead of minimality (see Theorem \ref{dicotomia}). Theorem \ref{B} proves, in particular, that $f$ and $f^{-1}$ are not positively countably-expansive, which by definition means that local stable sets are countable, see definition \ref{def_countably_exp}. 
The literature about positively cw-expansive homeomorphisms is growing. The first classical result about this topic is that positive expansiveness for homeomorphisms on compact metric spaces implies the space is finite \cite{keynes1969posit_exp}. 
However, the restriction of the classical example of Denjoy circle diffeomorphism to its minimal invariant cantor subset is a positively 2-expansive homeomorphism \cite{morales2012exp}.  
More examples of positively n-expansive homeomorphisms on Cantor sets are introduced in \cite{li2015levexp} 
for each $n\in\N$ and were further explored in \cite{oprocha2023various}. 
It is known that these examples do not satisfy the shadowing property. Indeed, it was proved in \cite{carvalho2019positively} and \cite{carvalho2020lshadowing} 
that if a positively n-expansive homeomorphism satisfies the shadowing property, then the space is finite, and in \cite{antunes2023sensitivity} 
that if a positively countably expansive homeomorphism is transitive and satisfies the shadowing property, then the space is countable. A consequence of Theorem \ref{B} is that all these examples do not satisfy the gluing-orbit property. This also contrasts with the identity map on a countable and compact set that is positively countably expansive and has shadowing, though it is not transitive.

In the second main result of this article, we discuss the gluing-orbit property for the induced map on the hyperspace of compact subsets. The dynamics of the induced map can be seen as collective dynamics and its chaotic properties can be seen as set-valued chaos, while the base map is seen as the individual dynamics and chaos. Precisely, let $f\colon X\rightarrow X$ be a homeomorphism defined in a compact metric space $X$, $2^X$ denote the space of compact subsets of $X$ endowed with the Hausdorff metric, and $2^f$ be its induced map on $2^X$ defined by $2^f(A)=f(A)$. The study of relations between the dynamics of $f$ and $2^f$ is classical in the literature and many results regarding whether certain dynamical properties hold simultaneously for both maps (or not) are proved. Of particular interest are the results of Bauern and Sigmund \cite{Bauer1975specf} 
that $2^f$ has the specification property if, and only if, $f$ has the specification property, and more recently, Fernandez and Good \cite{fernadez2016shad} 
proved the same is true for the shadowing property. In our second main result we obtain necessary conditions that $f$ has to satisfy so that $2^f$ can have the gluing-orbit property.
 
\begin{mainthm}\label{gluinghiper}
If the induced map $2^f$ has the gluing-orbit property, then $f$ has the gluing-orbit property and is topologically mixing.
\end{mainthm}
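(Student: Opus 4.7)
Both conclusions follow from applying the gluing-orbit property of $2^f$ to carefully chosen compact sets in $2^X$. To see that $f$ itself has the gluing-orbit property, given an $f$-specification $(x_i,n_i)_{i=1}^k$ and $\varepsilon>0$, I would lift it to the specification $(\{x_i\},n_i)_{i=1}^k$ in $2^X$ and apply the gluing-orbit property of $2^f$ at scale $\varepsilon$. This yields a compact set $B\in 2^X$ and gaps $p_i\le N(\varepsilon)$ satisfying $d_H(f^{t_i+j}(B),\{f^j(x_i)\})<\varepsilon$ at the prescribed times, where $t_i=\sum_{\ell<i}(n_\ell+p_\ell)$ and $d_H$ denotes the Hausdorff metric. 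Since pointwise distance is dominated by Hausdorff distance, picking any $y\in B$ gives $d(f^{t_i+j}(y),f^j(x_i))<\varepsilon$, which is exactly the gluing-orbit property for $f$.

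For topological mixing, fix nonempty open sets $U,V\subset X$ and select $x_U\in U$, $x_V\in V$, and $\delta_0>0$ with $B(x_U,\delta_0)\subset U$ and $B(x_V,\delta_0)\subset V$. Using uniform continuity of the finitely many iterates $f^{-1},\dots,f^{-N(\varepsilon)}$ on the compact space $X$, I would choose $\varepsilon\in(0,\delta_0)$ small enough that, setting $N:=N(\varepsilon)$, the inequality $d(y,f^i(x_V))<\varepsilon$ implies $d(f^{-i}(y),x_V)<\delta_0$ for every $i\in\{0,\dots,N\}$. I then form the (finite, hence compact) subset $A:=\{x_V,f(x_V),\dots,f^N(x_V)\}$ of $X$.

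For any target integer $n\ge 1$, applying the gluing-orbit property of $2^f$ to the two-piece specification $(\{x_U\},n),(A,1)$ yields a compact set $B\subset X$ and a gap $p\in\{0,\dots,N\}$ with $d_H(B,\{x_U\})<\varepsilon$ (forcing $B\subset U$) and $d_H(f^{n+p}(B),A)<\varepsilon$. Focusing on the point $f^p(x_V)\in A$, the Hausdorff condition provides $b\in B$ with $d(f^{n+p}(b),f^p(x_V))<\varepsilon$; applying $f^{-p}$ together with the modulus-of-continuity choice gives $d(f^n(b),x_V)<\delta_0$, and hence $f^n(b)\in V$. Therefore $f^n(U)\cap V\ne\emptyset$ for every $n\ge 1$, establishing topological mixing.

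The main obstacle is the coordinated selection of $\varepsilon$ in the second paragraph, since both the gluing-orbit constant $N(\varepsilon)$ and the moduli of continuity of the iterates $f^{-1},\dots,f^{-N(\varepsilon)}$ depend on $\varepsilon$: shrinking $\varepsilon$ tightens the continuity requirement but simultaneously enlarges $N(\varepsilon)$, forcing control over more iterates. I would resolve this compatibility issue by an iterative shrinking procedure, exploiting that each $f^{-i}$ is individually uniformly continuous on the compact metric space $X$, so that the joint modulus over any fixed finite family remains positive and a stable choice of $\varepsilon$ can be reached.
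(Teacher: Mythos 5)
The first half of your argument --- deducing the gluing-orbit property of $f$ from that of $2^f$ by tracing the lifted orbit sequence $(\{x_i\},n_i)$ and then picking any point of the tracing set $B$ --- is correct and is exactly the paper's argument; just note that ``pointwise distance is dominated by Hausdorff distance'' is being invoked in the special case where one of the two sets is a singleton, which is the only case where it is valid and the only case you need.

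The mixing half has a genuine gap, and it is precisely the one you flag in your last paragraph. You need a single $\varepsilon$ satisfying two competing constraints: $\varepsilon$ must be at most the joint modulus of continuity (at scale $\delta_0$) of $f^{-1},\dots,f^{-N(\varepsilon)}$, while $N(\varepsilon)$ is itself produced by the gluing-orbit property at scale $\varepsilon$ and grows as $\varepsilon$ shrinks. Writing $\eta(N)$ for that joint modulus, the requirement is $\varepsilon\le\eta(N(\varepsilon))$; since $\varepsilon\mapsto\eta(N(\varepsilon))$ is non-increasing as $\varepsilon$ decreases, the proposed iterative shrinking $\varepsilon_{k+1}:=\eta(N(\varepsilon_k))$ only produces a decreasing sequence, and nothing rules out the possibility that $\eta(N(\varepsilon))<\varepsilon$ for every $\varepsilon>0$, in which case no admissible $\varepsilon$ exists. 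So the pull-back step ``$d(f^{n+p}(b),f^p(x_V))<\varepsilon$ implies $d(f^n(b),x_V)<\delta_0$'' is not justified as written. The paper avoids inverse iterates altogether by taking the second piece of the orbit sequence to be the \emph{backward} orbit segment $E_k=\{y,f^{-1}(y),\dots,f^{-M-k}(y)\}$ traced for $M+k$ steps: then $y\in f^i(E_k)$ for every $i\in\{0,\dots,M+k\}$, so the Hausdorff tracing directly yields a point of $f^{i+N_k}(A_k)$ inside $B(y,\varepsilon)\subset V$ whose preimage lies in $A_k\subset B(x,\varepsilon)\subset U$; letting $i$ range over $\{0,\dots,M+k\}$ and $k\to\infty$ gives $f^n(U)\cap V\neq\emptyset$ for all $n\ge M$. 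If you replace your forward segment $\{x_V,\dots,f^N(x_V)\}$ by such a backward segment, your argument closes without any continuity estimates.
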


A direct and surprising consequence is that there are examples of homeomorphisms with gluing-orbit such that its induced map $2^f$ does not have gluing-orbit. This happens for any transitive but not topologically mixing homeomorphism with gluing-orbit (see Corollary \ref{Cornot} and Example \ref{Example}) and contrasts with the case of shadowing and specification. This result is also related to the following question:

\begin{question}
Do topologically mixing homeomorphisms with gluing-orbit also satisfy the specification property?    
\end{question}

A similar question was stated as Question 1 in \cite{bomfim2019gluing} in the context of $C^1$ diffeomorphisms. We were not able to answer this question, but if it is true, then gluing-orbit and specification would be equivalent for the induced map $2^f$ since specification is equivalent for $f$ and $2^f$. Thus, any $f$ such that $2^f$ has gluing-orbit but not specification would be an example of a topologically mixing homeomorphism with gluing-orbit and without specification.

\section{Gluing-orbit and local stable/unstable sets}


In this section, we state all necessary definitions and prove Theorem \ref{B}. Let $f:X\rightarrow X$ be a homeomorphism defined on a compact metric space $(X,d)$. 

\begin{definition}[Local stable/unstable sets]
We consider the \emph{c-stable set} of $x\in X$ as the set 
$$W^s_{c}(x):=\{y\in X; \,\, d(f^k(y),f^k(x))\leq c \,\,\,\, \textrm{for every} \,\,\,\, k\geq 0\}$$
and the \emph{c-unstable set} of $x$ as the set 
$$W^u_{c}(x):=\{y\in X; \,\, d(f^k(y),f^k(x))\leq c \,\,\,\, \textrm{for every} \,\,\,\, k\leq 0\}.$$
\end{definition}


\begin{definition}[Sensitivity]
A map $f\colon X\rightarrow X$ is \emph{sensitive} if there exists $\varepsilon>0$ such that for every $x\in X$ and every $\delta>0$ there exist $y\in X$ with $d(x,y)<\delta$ and $n\in\mathbb{N}$ satisfying $d(f^{n}(x),f^{n}(y))>\varepsilon$, that is, $B(x,\delta)$ is not contained in $W^s_{\eps}(x)$, where $B(x,\delta)=\{y\in X; \,\,\, d(y,x)<\delta\}$ is the ball centered at $x$ and radius $\delta$. The number $\eps$ is called the \emph{sensitivity constant} of $f$. 
\end{definition}

\begin{definition}[Shadowing]
We say that a homeomorphism $f:X\rightarrow X$ satisfies the \emph{shadowing property} if given $\varepsilon>0$ there is $\delta>0$ such that for each sequence $(x_n)_{n\in\mathbb{Z}}\subset X$ satisfying
$$d(f(x_n),x_{n+1})<\delta \,\,\,\,\,\, \text{for every} \,\,\,\,\,\, n\in\mathbb{Z}$$ there is $y\in X$ such that
$$d(f^n(y),x_n)<\varepsilon \,\,\,\,\,\, \text{for every} \,\,\,\,\,\, n\in\mathbb{Z}.$$
In this case, we say that $(x_k)_{k\in\mathbb{Z}}$ is a $\delta-$pseudo orbit of $f$ and that $(x_n)_{n\in\mathbb{Z}}$ is
$\varepsilon-$shadowed by $y$.
\end{definition}


The following result, proved in \cite{antunes2023sensitivity}, uses sensitivity and shadowing to construct compact and perfect subsets within each local stable/unstable set.

\begin{theorem}\cite{antunes2023sensitivity}\label{teoremaACT} Let $f\colon X\to X$ be a homeomorphism of a compact metric space $X$ satisfying the shadowing property. 
	\begin{enumerate}
		\item If $f$ is sensitive, with sensitivity constant $\eps>0$, then for each $x\in X$ there is a compact and perfect set $$C_x\subset W^u_\varepsilon(x).$$ 
		\item If $f^{-1}$ is sensitive, with sensitivity constant $\eps>0$, then for each $x\in X$ there is a compact and perfect set $$C_x\subset W^s_\varepsilon(x).$$
	\end{enumerate}
\end{theorem}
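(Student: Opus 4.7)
The plan is to construct, for each $x \in X$, an uncountable subset of $W^u_\eps(x)$ by encoding binary sequences as branching pseudo-orbits whose backward halves track the backward orbit of $x$, then applying shadowing to realize each sequence as a genuine orbit; a Cantor--Bendixson extraction finishes the argument. Part (2) reduces to part (1) applied to $f^{-1}$, since the unstable set of $x$ under $f^{-1}$ is the stable set of $x$ under $f$, so I focus on part (1).

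First, set constants: let $\eps>0$ be the sensitivity constant and let $\delta>0$ be the shadowing constant associated to the tracking error $\eps/3$. By sensitivity, at every $p\in X$ there exist $y_p\in B(p,\delta)$ and $m_p\geq 0$ with $d(f^{m_p}(p),f^{m_p}(y_p))>\eps$. Next, inductively build for each finite binary string $\sigma\in\{0,1\}^{<\omega}$ a $\delta$-pseudo-orbit $(\alpha^\sigma_k)_{k\in\Z}$ and a branching time $T_\sigma\geq 0$: start with $\alpha^\emptyset_k=f^k(x)$ and $T_\emptyset=0$; given $(\alpha^\sigma,T_\sigma)$, set $p=\alpha^\sigma_{T_\sigma}$, invoke sensitivity at $p$, and define $\alpha^{\sigma 0}=\alpha^\sigma$ (the ``stay'' option) and $\alpha^{\sigma 1}$ by replacing the trajectory from time $T_\sigma$ onward with the $f$-orbit of $y_p$; set $T_{\sigma 0}=T_{\sigma 1}=T_\sigma+m_p+1$. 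Each $\alpha^\sigma$ has only single-coordinate jumps of size less than $\delta$ at the branching times, so it is a genuine $\delta$-pseudo-orbit, and by construction $\alpha^\sigma_k=f^k(x)$ for every $k\leq 0$.

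For $\sigma\in\{0,1\}^{\N}$ the branching times $T_{\sigma|_i}$ are strictly increasing, so on every finite time window the finite pseudo-orbits $\alpha^{\sigma|_i}$ eventually stabilize, yielding a $\delta$-pseudo-orbit $\alpha^\sigma$ on $\Z$. Shadowing produces $z^\sigma\in X$ with $d(f^k(z^\sigma),\alpha^\sigma_k)<\eps/3$ for all $k\in\Z$, and evaluating at $k\leq 0$ gives $z^\sigma\in W^u_{\eps/3}(x)\subset W^u_\eps(x)$. The assignment $\sigma\mapsto z^\sigma$ is injective by the branching: if $\sigma,\sigma'$ first disagree at position $i$, then at the corresponding branching time plus $m_p$ the pseudo-orbits are $\eps$-apart, so their shadows stay more than $\eps/3$ apart. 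This produces an uncountable set $S\subset W^u_\eps(x)$; its closure is a closed uncountable subset of the closed set $W^u_\eps(x)$ (the latter being an intersection of closed sets), and by the Cantor--Bendixson theorem its nonempty perfect kernel is the desired compact perfect set $C_x$.

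The main obstacle is the inductive tree bookkeeping: since the branching times and jump destinations depend on the choices made so far, one must verify that a single shadowing tolerance $\delta$ suffices for every pseudo-orbit in the tree. This is automatic because every jump produced by the sensitivity step is, by choice of $y_p$, smaller than $\delta$. A secondary technical point is checking what happens when passing from finite to infinite binary strings; here the strict monotonicity $T_{\sigma|_i}\to\infty$ ensures that each coordinate $\alpha^\sigma_k$ is determined after finitely many inductive steps, so no genuine limit argument is required to define the pseudo-orbit on $\Z$.
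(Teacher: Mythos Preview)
This theorem is quoted from \cite{antunes2023sensitivity} and is not proved in the present paper, so there is no in-paper proof to compare against directly. The closest analogue is the proof of Theorem~\ref{B}, which carries out the same construction with gluing-orbit replacing shadowing; I compare your argument to that.

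Your proof is essentially correct, and the core mechanism---use sensitivity to branch forward and shadow the resulting pseudo-orbits backward along the orbit of $x$---is the same one the paper uses. The main methodological difference is that the paper works with a \emph{decreasing} sequence of tolerances $\eps_k<\eps/2^k$: at stage $k$ it produces a finite set $C_k$ of $2^k$ points so that each $y\in C_{k-1}$ acquires a new companion $c_k(y)\in C_k$ within distance $\eps/2^k$, which forces $\overline{\bigcup_k C_k}$ to be perfect by a direct argument. You instead fix a single shadowing tolerance $\delta$, build the whole binary tree of pseudo-orbits at once, shadow to obtain an uncountable subset of $W^u_\eps(x)$, and then invoke Cantor--Bendixson to extract the perfect kernel. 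Your route is softer and sidesteps the level-by-level distance bookkeeping, at the cost of an appeal to descriptive set theory; the paper's route is more hands-on and yields slightly more, since the perfect set it produces visibly contains $x$.

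One minor slip: with $T_\emptyset=0$ and the branching replacing the pseudo-orbit from time $T_\sigma$ onward, you get $\alpha^\sigma_0=y_x\neq x$ whenever $\sigma$ begins with $1$, so the claim ``$\alpha^\sigma_k=f^k(x)$ for every $k\le 0$'' fails at $k=0$. This does not affect the final conclusion $z^\sigma\in W^u_\eps(x)$ (since $d(\alpha^\sigma_0,x)<\delta$ and one may take $\delta\le\eps/3$), but the intermediate claim $z^\sigma\in W^u_{\eps/3}(x)$ is not quite right as written; setting $T_\emptyset=1$, or replacing $\eps/3$ by $\eps/2$ and requiring $\delta<\eps/2$, repairs it.
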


Consequences to the study of positively countably expansive homeomorphisms were obtained from this result in \cite{antunes2023sensitivity}.

\begin{definition}[Positively countably expansive]\label{def_countably_exp}
We say that a homeomorphism $f:X\rightarrow X$ is \emph{positively countably-expansive} if there exists $\varepsilon>0$ such that $W^s_\varepsilon(x)$ is countable for every $x\in X$. 
\end{definition}

\begin{theorem}\cite{antunes2023sensitivity}\label{ACT}
	Let $f:X\rightarrow X$ be a positively countably-expansive homeomorphism, defined in a compact metric space $X$.
	If at least one of the following conditions is satisfied
	\begin{itemize}
		\item[(1)] $f$ is transitive and has the shadowing property
		\item[(2)] $f$ has the L-shadowing property
	\end{itemize} then $X$ is countable.
\end{theorem}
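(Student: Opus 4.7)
The plan is to argue by contradiction: assume $X$ is uncountable and use Theorem \ref{teoremaACT} to locate an uncountable subset of some $W^s_\eps(x)$, violating positive countable expansiveness. Fix the constant $\eps > 0$ from positive countable expansiveness and assume condition (1). Since $f$ is transitive on the infinite compact space $X$, no point of $X$ can be isolated (an isolated point in a transitive orbit would force $X$ to coincide with a finite periodic orbit), so $X$ is perfect and every nonempty open ball is uncountable. If $f$ were not sensitive, then for the constant $\eps$ there would exist a point $x_0$ and $\delta > 0$ with $B(x_0,\delta) \subset W^s_\eps(x_0)$, placing an uncountable set inside a countable one; a contradiction. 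Hence $f$ is sensitive, and by shrinking the sensitivity constant if necessary I may take it to be some $\eps_1 < \eps/2$. Theorem \ref{teoremaACT}(1) then provides, for each $x \in X$, a compact perfect set $C_x \subset W^u_{\eps_1}(x)$.

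Next I would use shadowing to push the Cantor set $C_x$ into the stable set of $x$. For each $c \in C_x$, the two-sided sequence that agrees with the backward orbit of $c$ at nonpositive indices and with the forward orbit of $x$ at positive indices is a pseudo-orbit whose only error, at index $0$, is $d(f(c),f(x))$; by uniform continuity of $f$ together with $d(c,x) \leq \eps_1$, this error is below the shadowing input $\delta(\eta)$ for any prescribed $\eta \leq \eps/2$. Shadowing then produces $y_c \in X$ satisfying $d(f^n(y_c),f^n(x)) < \eta$ for $n \geq 1$ and $d(f^{-n}(y_c),f^{-n}(c)) < \eta$ for $n \geq 0$. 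Combining the time-$0$ bound $d(y_c,c) < \eta$ with $d(c,x) \leq \eps_1$ yields $d(y_c,x) < \eta + \eps_1 \leq \eps$, hence $y_c \in W^s_\eps(x)$ for every $c \in C_x$. Since $W^s_\eps(x)$ is countable, the map $c \mapsto y_c$ collapses an uncountable subfamily $F \subset C_x$ onto a single point $y^\ast$; every pair $c,c' \in F$ then satisfies $d(f^{-n}(c),f^{-n}(c')) < 2\eta$ for all $n \geq 0$, i.e., $c' \in W^u_{2\eta}(c)$. I would then rerun the argument along a decreasing sequence $\eta_k \to 0$ and, using the perfectness of $C_x$, select pairs in the corresponding fibers $F_k$ of vanishing mutual distance, eventually forcing $c = c'$ and obtaining a contradiction.

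Condition (2) is handled by the same scheme with L-shadowing replacing shadowing plus transitivity. L-shadowing provides asymptotic tracking of asymptotic pseudo-orbits, so the analogous construction produces $y_c$ with $d(f^{-n}(y_c),f^{-n}(c)) \to 0$ as $n \to +\infty$; a collision $y_c = y_{c'}$ then forces $f^{-n}(c)$ and $f^{-n}(c')$ to be two-sided asymptotic, which drives $c = c'$ directly through positive countable expansiveness applied to the backward dynamics. Transitivity is replaced here by the observation that L-shadowing alone implies either $f$ is sensitive or $X$ decomposes as a finite union of periodic orbits (in which case the conclusion is trivial). The principal obstacle throughout is the final collapse step: without genuine expansivity, uniqueness of shadowing points is not automatic, and the argument requires a delicate balancing of the constants $\eps, \eps_1, \eta, \delta$ against the perfect structure of $C_x$ to convert the large fiber $F$ into an actual contradiction with $c \neq c'$.
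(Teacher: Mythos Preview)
The paper does not prove Theorem~\ref{ACT}; it is quoted from \cite{antunes2023sensitivity} as background, so there is no ``paper's own proof'' to compare against. I will therefore assess the proposal on its own merits.

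Your argument has a genuine gap at exactly the place you flag as ``the principal obstacle,'' and the sketch you give does not close it. After the pigeonhole step you obtain an uncountable $F\subset C_x$ with $F\subset W^u_{2\eta}(c)$ for every $c\in F$. But the hypothesis of positive countable expansiveness constrains only the \emph{stable} sets $W^s_\eps$, not the unstable ones, so an uncountable subset of $W^u_{2\eta}(c)$ is no contradiction at all. Your plan to ``rerun along $\eta_k\to 0$'' cannot be carried out with a fixed $C_x$: you chose the sensitivity constant $\eps_1$ (and hence the set $C_x$ produced by Theorem~\ref{teoremaACT}) depending on $\eta$ through the shadowing input $\delta(\eta)$, so as $\eta_k\to 0$ the perfect sets $C_x^{(k)}$ change and the fibers $F_k$ live in different sets with no evident nesting. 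Even ignoring this, the conclusion ``pairs of vanishing mutual distance eventually force $c=c'$'' is not an argument --- distinct points at distance $<2\eta_k$ exist in any perfect set for every $k$. The same defect recurs in your L-shadowing paragraph: a collision $y_c=y_{c'}$ yields $f^{-n}(c)$ and $f^{-n}(c')$ asymptotic as $n\to+\infty$, which again concerns $W^u$, not $W^s$, and so does not invoke positive countable expansiveness of $f$.

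The detour through $W^u$ is the wrong direction. The clean route is to show that $f^{-1}$ is sensitive and then apply Theorem~\ref{teoremaACT}(2) directly to place a compact perfect set inside $W^s_\eps(x)$, which \emph{is} an immediate contradiction. Establishing sensitivity of $f^{-1}$ from transitivity, shadowing, and uncountability of $X$ is the actual content you are missing; your current draft never addresses it.
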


The L-shadowing property was introduced in \cite{carvalho2019positively} 
and further explored in \cite{carvalho2020lshadowing}. 
In Theorem \ref{B} we generalize these results using the gluing-orbit property. Now we proceed to the definition of the gluing-orbit property following the definition in \cite{sun2020zero}. 

\begin{definition}[Gluing-orbit property]\label{gluingorbit}
For $M\in\N$, we denote by
\[\Sigma_M:=\{1,2,\ldots,M\}^{\N}\] the space of sequences with entries in $\{1,2,\ldots,M\}$.
We call a sequence 
\[\mathscr{C}=\{(x_j,m_j)\}_{j\in\mathbb{N}}\] of ordered pairs in $X\times \mathbb{N}$ an orbit sequence. A gap for an orbit sequence is a sequence
\[\mathscr{G}=\{t_j\}_{j\in\mathbb{N}}\] of positive integers. For a metric space $(X,d)$, a map $f\colon X\rightarrow X$ and $\eps>0$, we say that $(\mathscr{C},\mathscr{G})$ is $\eps$-traced by $z\in X$ if
\[d(f^{s_j+l}(z),f^l(x_j))\leq \eps\;\;\mbox{ for each }\;\;l=0,1,\ldots m_j \;\;\mbox{ and each }\;\; j\in\N,\]
where
\[s_1:=0\;\;\mbox{ and }\;\;s_j:=\sum_{i=1}^{j-1}(m_i+t_i)\;\;\mbox{ for }\;\;j\geq 2.\]
We say that $f$ has the \emph{gluing-orbit property} if for each $\eps>0$ there is $M=M(\eps)>0$ such that for each orbit sequence $\mathscr{C}$, there is a gap $\mathscr{G}\in\sum_M$ such that $(\mathscr{C},\mathscr{G})$ can be $\eps$-traced.
\end{definition}


The following propositions are basic consequences of the gluing-orbit property that will be important to the proof of Theorem \ref{B}.

\begin{proposition}\label{espaçoqueadmitegluing}
    If $f\colon X\rightarrow X$ has the gluing-orbit property, then either $X$ is perfect, or it consists of a single periodic orbit.
\end{proposition}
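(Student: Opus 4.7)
The plan is a proof by contradiction: suppose $X$ is not perfect and fix an isolated point $x_0 \in X$; the goal is to show that $X$ reduces to the single periodic orbit of $x_0$. Fix $\delta_0 > 0$ with $B(x_0, \delta_0) \cap X = \{x_0\}$. Whenever we choose $\eps < \delta_0$, any point of $X$ within $\eps$ of $x_0$ must coincide with $x_0$, which is the mechanism that will upgrade the approximate tracing supplied by the gluing-orbit property to exact statements about the orbit of $x_0$.

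The first step is to show $x_0$ is periodic. Fix any $\eps \in (0, \delta_0)$ and let $M = M(\eps)$ be given by the gluing-orbit property. Apply it to the constant orbit sequence $\mathscr{C} = \{(x_0, 1)\}_{j \in \mathbb{N}}$, obtaining a gap $\{t_j\} \in \Sigma_M$ and a tracing point $z \in X$. The tracing condition at $j = 1$ forces $d(z, x_0) \leq \eps$, hence $z = x_0$, and then the condition at $j = 2$ (where $s_2 = 1 + t_1$) forces $d(f^{1 + t_1}(x_0), x_0) \leq \eps$, hence $f^{1 + t_1}(x_0) = x_0$. So $x_0$ is periodic, and its orbit $O = \{x_0, f(x_0), \ldots, f^{t_1}(x_0)\}$ is a finite, therefore closed, subset of $X$.

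The second step is to show $X = O$. Suppose for contradiction that some $y \in X \setminus O$; since $O$ is closed, $\eta := d(y, O) > 0$. Pick $\eps < \min(\delta_0, \eta)$ and apply gluing-orbit with $M = M(\eps)$ to the alternating orbit sequence $\mathscr{C} = \{(x_0, 1), (y, 1), (x_0, 1), (y, 1), \ldots\}$. The $j = 1$ condition again forces the tracing point $z$ to equal $x_0$, and the $j = 2$ condition then gives $d(f^{1 + t_1}(x_0), y) \leq \eps$. Since $f^{1 + t_1}(x_0) \in O$, this yields $d(y, O) \leq \eps < \eta$, a contradiction. Hence $X = O$.

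The subtle point is the choice of orbit sequence at each step: the constant sequence in step one isolates the periodicity of $x_0$, while the alternating $x_0/y$ sequence in step two injects an arbitrary $y$ into the tracing condition while still forcing the tracing point to be exactly $x_0$ via its first coordinate. Everything else is a matter of taking $\eps$ small enough relative to $\delta_0$ and (in step two) to $d(y, O)$, and the only reason $d(y, O) > 0$ is available is that step one makes $O$ a finite set.
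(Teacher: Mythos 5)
Your proof is correct and follows essentially the same route as the paper: use the isolating ball around $x_0$ to upgrade the $\eps$-tracing of an orbit sequence starting at $x_0$ to an exact identity (first forcing $f^{1+t_1}(x_0)=x_0$, hence periodicity, then gluing $x_0$ to an arbitrary $y$ and taking $\eps$ smaller than the distance from $y$ to the finite orbit to get a contradiction). The only differences are cosmetic (segment lengths $m_j=1$ versus $0$, and bounding $d(y,O)$ at once rather than listing the distances to each orbit point).
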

\begin{proof}
If $X$ is not perfect, there is $x\in X$ an isolated point. Choose $\eps>0$ such that $B(x,\eps)\cap X=\{x\}$ and let $M=M(\eps)$ be given by the gluing-orbit property. Then there is $z\in X$ which $\eps$-traces the orbit sequence $\mathscr{C}=\{(x,0),(x,0)\}$ with some gap $\mathscr{G}=\{g\}\subset\{1,\ldots,M\}$. Thus, $d(x,z)<\eps$ and $d(f^{g}(z),x)<\eps$, which imply that $z=x=f^g(z)$ since $B(x,\eps)$ is an isolating neighborhood. Therefore, $x$ is a periodic point with period at most $M$. Suppose that there is $y\in X$ that it is not in the orbit of $x$ and choose 
$$\delta<\min\{\eps,d(x,y),d(f(x),y),\ldots, d(f^{g-1}(x),y)\}.$$ The gluing-orbit property ensures the existence of $N=N(\delta)$ and $z\in X$ that $\delta$-traces the orbit sequence $\mathscr{C}=\{(x,0), (y,0)\}$ with a gap $\mathscr{G}'=\{g'\}\in \{1,2,\ldots, N\}$. Again, $x=z$ since $B(x,\delta)\subset B(x,\eps)=\{x\}$, which implies that $g'\in\{1,\dots,g-1\}$, and $d(f^{g'}(x),y)<\delta$, which contradicts the choice of $\delta$. This proves that if $X$ contains an isolated point, this point is periodic and $X$ consists of its periodic orbit.
\end{proof}


\begin{proposition} \label{gluingorbitinversa}
If $f:X\rightarrow X$ is a homeomorphism defined on a compact metric space, then $f$ has the gluing-orbit property if, and only if, $f^{-1}$ has the gluing-orbit property.
\end{proposition}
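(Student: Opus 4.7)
The plan is to prove the nontrivial direction: assuming $f$ has the gluing-orbit property with constants $M(\eps)$, so does $f^{-1}$ with the same constants. The converse follows by applying this to $f^{-1}$ and using $(f^{-1})^{-1}=f$. Given an orbit sequence $\mathscr{C}=\{(x_j,m_j)\}_{j\in\mathbb{N}}$ for $f^{-1}$, the key observation is that running the $j$-th backward piece under $f^{-1}$ is the same as running a forward $f$-orbit starting from the endpoint $y_j := f^{-m_j}(x_j)$; concretely, $(f^{-1})^l(x_j) = f^{m_j-l}(y_j)$ for $0 \le l \le m_j$. Thus time-reversing under $f$ should convert any $\eps$-tracing for $f$ of the reversed list of $y_j$'s into an $\eps$-tracing for $f^{-1}$ of $\mathscr{C}$.

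First I would handle finite truncations. Fix $N \in \mathbb{N}$ and form the reversed orbit sequence $\mathscr{C}_N' := \{(y_{N-k+1}, m_{N-k+1})\}_{k=1}^N$ for $f$. Applying the gluing-orbit property of $f$ produces a tracing point $w_N \in X$ and gaps $\{t_k'\}_{k=1}^{N-1} \subset \{1,\ldots,M(\eps)\}$. Let $T_N := s'_N + m_1$ be the last position used in the $f$-tracing, set $z_N := f^{T_N}(w_N)$, and declare the $f^{-1}$-gaps to be $t_i := t'_{N-i}$. A direct computation then yields the identity $T_N - s_j = s'_{N-j+1} + m_j$ for each $j \le N$, so that
\[d((f^{-1})^{s_j+l}(z_N),\, (f^{-1})^l(x_j)) = d(f^{s'_{N-j+1}+(m_j-l)}(w_N),\, f^{m_j-l}(y_j)) \le \eps\]
for all $0 \le l \le m_j$; that is, $z_N$ $\eps$-traces the first $N$ pieces of $\mathscr{C}$ under $f^{-1}$.

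To upgrade to the full infinite sequence, I would use a standard diagonal extraction. Since $X$ is compact and each gap lies in the finite set $\{1,\ldots,M(\eps)\}$, there is a subsequence $N_\ell$ along which $z_{N_\ell}$ converges to some $z \in X$ and each $t_i^{(N_\ell)}$ is eventually constant, say equal to $t_i$. Continuity of $f^{-1}$ and closedness of the inequality $\le \eps$ then imply that $z$ $\eps$-traces $\mathscr{C}$ under $f^{-1}$ with gaps $\{t_i\} \in \Sigma_{M(\eps)}$, establishing the gluing-orbit property for $f^{-1}$ with the same constant.

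The main obstacle, modest as it is, will be getting the index bookkeeping right: one must verify the algebraic identity $\sum_{i=1}^{j-1} t_i = \sum_{k=N-j+1}^{N-1} t'_k$ that forces the choice $t_i = t'_{N-i}$ and aligns $T_N - s_j$ with $s'_{N-j+1} + m_j$, after which everything reduces to a single application of $f$ versus $f^{-1}$ on the same orbit. Once that is set up, the compactness/diagonal step is entirely routine.
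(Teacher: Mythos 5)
Your proposal is correct and follows essentially the same route as the paper: reverse the finite orbit sequence, replace each $x_j$ by its backward endpoint $f^{-m_j}(x_j)$, apply the gluing-orbit property of $f$, push the tracing point forward by the total length $s'_N+m_1$, and reverse the gaps. The only difference is that you also spell out the compactness/diagonal extraction needed to pass from finite truncations to the infinite orbit sequences required by Definition \ref{gluingorbit}, a step the paper leaves implicit; your index bookkeeping ($t_i=t'_{N-i}$ and $T_N-s_j=s'_{N-j+1}+m_j$) checks out.
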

\begin{proof}
Assume that $f$ has the gluing-orbit property, let $\eps>0$ be given, and let $M=M(\eps)\in\mathbb{N}$ be given by the gluing-orbit property. The proof goes by noting that if
\[\mathscr{C}=\{(x_j,m_j)\}_{1\leq j\leq k}\] is a finite orbit sequence for $f^{-1}$, and \[\mathscr{C}' = \{(f^{-m_{k+1-j}}(x_{k+1-j}),m_{k+1-j})\}_{1\leq j\leq k}\] is a finite orbit sequence for $f$. Then, if $z\in X$ $\eps$-traces $(\mathscr{C}',\mathscr{G}')$ with some gap $\mathscr{G}'=\{t_j\}_{1\leq j\leq k}$, then $f^{s_k+m_1}(z)$ $\eps$-traces $(\mathscr{C},\mathscr{G})$ for $f^{-1}$, where $\mathscr{G}=\{t_{k-j}\}_{1\leq j\leq k}$ and \[s_1:=0\;\;\;\mbox{ and } \;\;\;s_j:=\sum_{i=1}^{j-1}(m_i+t_i)\;\;\;\mbox{ for }\;\;\;2\leq j\leq k\] (see Figure \ref{figura:tracing}).

\begin{center}
    \begin{figure}[h]
	\centering 
	\includegraphics[width=13cm]{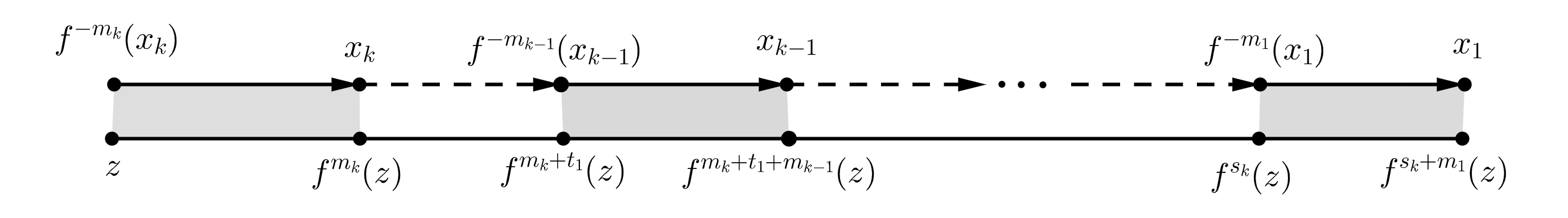} 
		\label{figura:tracing}
	\caption{The tracing of $(\mathscr{C}',\mathscr{G}')$ by $z$.}
\end{figure}
\end{center}

\end{proof}

In \cite{sun2020zero} 
the maps satisfying gluing-orbit and zero topological entropy were characterized as follows.

\begin{theorem}\label{sunequivalencia}\cite[Theorem 1.2]{sun2020zero}
    Let $f:X\rightarrow X$ be a continuous system defined on a compact metric space $X$ with the gluing orbit property. The following are equivalent:
    \begin{enumerate}
        \item $f$ has zero topological entropy.
        \item $f$ is minimal.
        \item $f$ is equicontinuous.
        \item $f$ is uniformly rigid.
    \end{enumerate}
\end{theorem}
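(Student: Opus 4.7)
The plan is to prove the equivalences via the cycle (3)$\Rightarrow$(4)$\Rightarrow$(1)$\Rightarrow$(3), together with (3)$\Leftrightarrow$(2), first observing that the gluing-orbit property implies transitivity: concatenating orbit pieces through a dense sequence $\{y_j\}\subset X$ with shrinking parameters $\eps_j\to 0$ yields a point whose forward orbit visits every $B(y_j,\eps_j)$, hence is dense. Given this, (3)$\Rightarrow$(2) is immediate, as an equicontinuous transitive homeomorphism on a compact metric space is minimal by uniform recurrence of every point.

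The implications (3)$\Rightarrow$(4) and (4)$\Rightarrow$(1) do not require gluing-orbit. For (3)$\Rightarrow$(4), the standard Ellis-group argument applies: equicontinuity makes the closure of $\{f^n:n\in\Z\}$ in $C(X,X)$ under uniform convergence a compact topological group, so the identity is a limit of $f^{n_k}$ for some $n_k\to\infty$, giving uniform rigidity. For (4)$\Rightarrow$(1), if $f^{n_k}\to\mathrm{id}$ uniformly, then for every $\eps>0$ and all sufficiently large $k$ the $(n,\eps)$-spanning number of $X$ is bounded independently of $n$, so $h_{\mathrm{top}}(f)=0$.

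The main obstacle is (1)$\Rightarrow$(3). My plan is contrapositive: assume $f$ is not equicontinuous. Using gluing-orbit to transport a failure of equicontinuity at one point throughout $X$, one first upgrades this to sensitivity of $f$ with some constant $\eps>0$ (an Auslander--Yorke style dichotomy in which the role of minimality is played by the transitivity plus bounded-gap shadowing afforded by gluing-orbit). Then fix $\eta=\eps/4$, let $M=M(\eta)$ come from the gluing-orbit property, and by sensitivity choose $a_0,a_1\in X$ and $m\in\N$ with $\dist(f^m(a_0),f^m(a_1))>\eps$. For each binary word $w=(w_1,\dots,w_k)\in\{0,1\}^k$, apply gluing-orbit to the orbit sequence
\[
\mathscr{C}_w=\{(a_{w_j},m)\}_{1\leq j\leq k}
\]
to produce $z_w\in X$ which $\eta$-traces $\mathscr{C}_w$ with gaps bounded by $M$.

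At the first block where two words $w\neq w'$ disagree, the triangle inequality forces $\dist(f^{s_j+m}(z_w),f^{s_j+m}(z_{w'}))\geq \eps-2\eta=\eps/2$, so the $2^k$ points $\{z_w\}$ form a $(k(m+M),\eps/2)$-separated set, giving $h_{\mathrm{top}}(f)\geq \tfrac{\log 2}{m+M}>0$ and contradicting zero entropy. The delicate step will be the initial upgrade of non-equicontinuity to sensitivity under only the gluing-orbit hypothesis (rather than minimality, as in the classical Auslander--Yorke dichotomy); once sensitivity is available, the gluing-orbit construction of exponentially many separated orbits is essentially bookkeeping.
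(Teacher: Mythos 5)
First, be aware that the paper does not prove this statement at all: it is imported verbatim from \cite{sun2020zero}*{Theorem 1.2} and used as a black box, so your attempt can only be measured against Sun's original argument, and measured that way it has genuine gaps. The most basic one is logical: you announce $(3)\Leftrightarrow(2)$ but only argue $(3)\Rightarrow(2)$; nothing in your scheme gives $(2)\Rightarrow(1)$, $(3)$ or $(4)$, and that direction is not free. A minimal system can perfectly well be sensitive with positive entropy, so ``minimal $\Rightarrow$ equicontinuous / zero entropy'' must use the gluing-orbit property in an essential way, and your cycle never touches it.

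Second, the separated-set construction for $(1)\Rightarrow(3)$ fails as written. The gap $\mathscr{G}_w$ furnished by the gluing-orbit property depends on the word $w$, so the block starting times $s_j$ for $z_w$ and $z_{w'}$ need not coincide even before the first disagreement; your triangle inequality at time $s_j+m$ compares $f^{s_j+m}(z_{w'})$ with a point that is tracing a different phase of its orbit sequence, and no separation follows. Pigeonholing onto a common gap pattern leaves only $2^k/M^{k-1}$ words, which is useless for $M\geq 2$. This phase-shift problem is exactly what Sun's Lemma 3.2 (quoted here as Lemma \ref{sun_1}) is built to handle: it provides separation between $f^\tau(p)$ and $f^\tau(f^m(p))$ for \emph{every} shift $m$, so that tracing points with different gap patterns can still be separated. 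Two further points: (i) your ``delicate step'' (not equicontinuous $\Rightarrow$ sensitive) is not an Auslander--Yorke-style afterthought --- for transitive non-minimal systems, failure of equicontinuity does not imply sensitivity (almost equicontinuous systems exist), so this step needs gluing-orbit essentially and is close to being the whole content of $(4)\Rightarrow(3)$; and (ii) your justification of $(4)\Rightarrow(1)$ is false as stated, since uniformly rigid systems (e.g.\ the weakly mixing ones of Glasner--Maon) need not have $(n,\eps)$-spanning numbers bounded in $n$ --- the implication is true, but via the variational principle together with the fact that every invariant measure of a uniformly rigid system is rigid and hence has zero entropy.
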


We recall all definitions of objects in this result since they will be necessary for the proof of Theorem \ref{B}.

\begin{definition}[Topological entropy]
    Given $n\in\N$ and $\delta>0$, we say that $E\subset X$ is $(n,\delta)$-\emph{separated}
if for each $x,y\in E$, $x\neq y$, there is $k\in \{0,\dots,n\}$ such that
$\dist(f^k(x),f^k(y))>\delta$.
Let $s(n,\delta)$ denotes the maximal cardinality of an $(n,\delta)$-separated subset $E\subset X$ (since $X$ is compact, $s(n,\delta)$ is finite).
Let \[
 h(f,\delta)=\limsup_{n\to\infty}\frac 1n\log s_n(f,\delta).
\]
Note that $h(f,\delta)$ increases as $\delta$ decreases to $0$ and define the \textit{topological entropy} of $f$ by $$h(f)=\lim_{\delta\to 0}h(f,\delta).$$
\end{definition} 



\begin{definition}[Uniformly rigid and equicontinuous maps]
We say that a map $f\colon X\rightarrow X$ in a compact metric space is \emph{uniformly rigid} if there is an increasing sequence $(n_k)_{k\in\N}$ such that $(f^{n_k})_{k\in\N}$ converges uniformly to the identity. It is called \emph{equicontinuous} if the sequence of iterates $(f^n)_{n\in\N}$ is an equicontinuous sequence of maps, that is, for each $\eps>0$ there exists $\delta>0$ such that $$B(x,\delta)\subset W^s_{\eps}(x) \,\,\,\,\,\, \text{for every} \,\,\,\,\,\, x\in X.$$
\end{definition}


\begin{definition}[Transitive point and minimal map]
We say that $p\in X$ is a transitive point if its future orbit is dense in $X$ and denote by $Tran(X,f)$ the set of all transitive points of $f$. We say that $f$ is \emph{minimal} if the future orbit of every point of the space is dense in the space, that is, $X=Tran(X,f)$.
\end{definition}

\begin{lemma}\label{sun_1}\emph{[}Lemma 3.2 in \cite{sun2020zero}\emph{]} If $(X,f)$ is not uniformly rigid, then there is $\gamma>0$ such that for every $p\in Tran(X,f)$ and every $m\in\mathbb{N}$, there is $\tau=\tau(p,m)\in\mathbb{N}$ such that $$d(f^\tau(p),f^\tau(f^m(p)))>\gamma.$$
\end{lemma}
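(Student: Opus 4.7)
The plan is to extract from the failure of uniform rigidity a uniform positive lower bound on how much $f^m$ can displace some point, valid for \emph{every} $m \geq 1$, and then use transitivity of $p$ together with continuity of the displacement function $x \mapsto d(x, f^m(x))$ to realize this displacement along the forward orbit of $p$.

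The first step I would carry out is to prove that
\[ 2\gamma := \inf_{m \geq 1} \sup_{x \in X} d(x, f^m(x)) > 0. \]
Suppose otherwise, so there exists a sequence $(m_k) \subset \mathbb{N}$ with $\sup_x d(x, f^{m_k}(x)) \to 0$. If $(m_k)$ is unbounded, a strictly increasing subsequence witnesses uniform convergence $f^{m_{k_j}} \to \operatorname{id}$, contradicting the hypothesis. Otherwise $(m_k)$ is bounded, so some value $m_\ast$ is attained infinitely often, forcing $\sup_x d(x, f^{m_\ast}(x)) = 0$, i.e.\ $f^{m_\ast} = \operatorname{id}$; but then the strictly increasing sequence $n_k := k\cdot m_\ast$ satisfies $f^{n_k} = \operatorname{id}$ for every $k$, again contradicting the assumption that $f$ is not uniformly rigid. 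This periodic subcase is the only place where a little care is needed in the first step.

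The second step is a short continuity-plus-density argument. Given $p \in Tran(X,f)$ and $m \in \mathbb{N}$, the first step yields some $y \in X$ with $d(y, f^m(y)) > 2\gamma$. Since the function $\varphi_m(x) := d(x, f^m(x))$ is continuous on the metric space $X$, there is an open neighborhood $U$ of $y$ on which $\varphi_m > \gamma$. By density of the forward orbit of $p$ there is $\tau \in \mathbb{N}$ with $f^\tau(p) \in U$, and since $f^\tau(f^m(p)) = f^m(f^\tau(p))$ we conclude
\[ d(f^\tau(p), f^\tau(f^m(p))) \; = \; \varphi_m(f^\tau(p)) \; > \; \gamma, \]
which is exactly the required inequality. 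The main obstacle is therefore really the dichotomy in the first step; once the uniform $\gamma$ is in hand, the rest of the argument is essentially formal.
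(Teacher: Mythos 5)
Your proof is correct. Note that the paper itself does not prove this lemma --- it is imported verbatim as Lemma 3.2 of \cite{sun2020zero} --- so there is no in-paper argument to compare against; your two-step reconstruction (first the uniform lower bound $\inf_{m\geq 1}\sup_{x\in X}d(x,f^m(x))>0$, whose failure forces uniform rigidity either through an unbounded subsequence of times or through $f^{m_\ast}=\operatorname{id}$ and the sequence $n_k=k m_\ast$, then continuity of $\varphi_m$ plus density of the forward orbit of $p$) is complete and is essentially the standard argument. Two cosmetic remarks: from $\sup_{x}d(x,f^m(x))\geq 2\gamma$ you can only extract $y$ with $d(y,f^m(y))>2\gamma-\delta$ for arbitrary $\delta>0$, not necessarily $>2\gamma$ as written; taking $\delta=\gamma$ still yields $\varphi_m>\gamma$ on an open set, which is all the second step needs. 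And to get $\tau\geq 1$ rather than $\tau=0$, observe that density of $\{f^n(p)\}_{n\geq 0}$ implies density of $\{f^n(p)\}_{n\geq 1}$ because $f$ is a homeomorphism of the compact space $X$.
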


This means that no iterate of $p$ can belong to the $\gamma$-stable set of $p$ and gives a way to ensure the existence of close points with separating future orbits as follows.

\begin{proposition}\label{sensitivity}If $X$ is a compact metric space and $f\colon X\rightarrow X$ is a transitive map that is not uniformly rigid, then $f$ is sensitive.
\end{proposition}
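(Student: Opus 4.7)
The plan is to establish that $\eps := \gamma/2$ is a sensitivity constant for $f$, where $\gamma>0$ is the constant from Lemma \ref{sun_1}. Given $x\in X$ and $\delta>0$, I need to exhibit $y\in X$ with $d(x,y)<\delta$ whose forward orbit separates from that of $x$ by more than $\eps$ at some time.

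First, I locate a transitive point near $x$: pick any $q\in Tran(X,f)$, which exists by transitivity, and use the density of its forward orbit to choose $k\in\mathbb{N}$ with $d(x,f^k(q))<\delta/2$; set $p:=f^k(q)$. Removing finitely many points from a dense set leaves it dense, so $p\in Tran(X,f)$ as well. Using the density of the forward orbit of $p$, I then choose $m\in\mathbb{N}$ with $d(p,f^m(p))<\delta/2$, and set $y_m:=f^m(p)$. By the triangle inequality, both $p$ and $y_m$ lie in $B(x,\delta)$.

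Next, I invoke Lemma \ref{sun_1} applied to $p$ and this $m$ to obtain $\tau\in\mathbb{N}$ with
\[d(f^\tau(p),f^\tau(y_m))>\gamma.\]
The key move is the triangle-inequality dichotomy:
\[d(f^\tau(p),f^\tau(y_m))\leq d(f^\tau(p),f^\tau(x))+d(f^\tau(x),f^\tau(y_m)),\]
so at least one summand on the right exceeds $\gamma/2=\eps$. Taking $y$ to be $p$ or $y_m$ according to which summand is larger produces the required witness for sensitivity.

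The main obstacle I anticipate is that one cannot directly estimate $d(f^\tau(p),f^\tau(x))$ from $d(p,x)<\delta/2$, because $\delta$-close points need not have $\eps$-close orbits (indeed, if they always did, $f$ would be equicontinuous and hence uniformly rigid, contrary to hypothesis, in view of Theorem \ref{sunequivalencia}). The triangle-inequality dichotomy circumvents this: instead of controlling the orbit of $x$ against that of a fixed nearby point, one exploits the existence of two nearby candidates ($p$ and $y_m$) whose orbits are guaranteed to separate from each other, and then argues that $x$ cannot be close to both at the separation time.
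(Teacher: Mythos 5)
Your proof is correct and follows essentially the same route as the paper: locate a transitive point $p$ near $x$, use Lemma \ref{sun_1} to separate the orbits of $p$ and $f^m(p)$ by $\gamma$, and then apply a triangle-inequality dichotomy to conclude that the orbit of $x$ must separate from at least one of these two nearby points. The only (immaterial) differences are that you treat all $x$ uniformly where the paper first handles the case $x\in Tran(X,f)$ separately, and you obtain the slightly better sensitivity constant $\gamma/2$ in place of the paper's $\gamma/4$.
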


\begin{proof}
We will prove that $\eps=\frac{\gamma}{4}$, where $\gamma>0$ is given by Lemma \ref{sun_1}, is a sensitivity constant of $f$. Let $\delta>0$ and $x\in X$.
If $x\in Tran(X,f)$, then there is $m>0$ such that $y=f^m(x)\in B(x,\delta)$ and Lemma \ref{sun_1} ensures the existence of $\tau\in\mathbb{N}$ such that $$d(f^\tau(x),f^\tau(y))>\gamma>\eps.$$
If $x\notin Tran(X,f)$, we can use the transitivity of $f$ to choose $p\in Tran(X,f)\cap B(x,\delta)$ and consider some neighborhood $U$ of $p$ such that $U\subset B(x,\delta)$. There is $m>0$ such that $y=f^m(p)\in U$ and by Lemma \ref{sun_1} there is $\tau>0$ such that
\begin{equation}\label{eq1}
d(f^\tau(p),f^\tau(y))>\gamma.
\end{equation} 
If $f^\tau(x)\notin B(f^\tau(p),\eps)\cup B(f^\tau(y),\eps)$, then both
$$d(f^\tau(p),f^\tau(x))>\eps \,\,\,\,\,\, \text{and} \,\,\,\,\,\, d(f^\tau(y),f^\tau(x))>\eps.$$
If $f^\tau(x)\in B(f^\tau(p),\eps)\cup B(f^\tau(y),\eps)$, then either $f^\tau(x)\in B(f^\tau(p),\eps)$ and (\ref{eq1}) ensures that $d(f^\tau(x),f^\tau(y))>\eps$, or $f^\tau(x)\in B(f^\tau(y),\eps)$ and (\ref{eq1}) ensures that $d(f^\tau(x),f^\tau(p))>\eps$.
In all cases we found points in $B(x,\delta)$ with future iterates that separate $\eps$ from the respective iterate of $x$. Since this holds for every $\delta>0$ and $x\in X$, sensitivity is proved.
\end{proof}

Using these results, we can conclude sensitivity for every homeomorphism with gluing-orbit and positive topological entropy.

\begin{theorem}\label{dicotomiagluing}
If $f\colon X\rightarrow X$ is a map of a compact metric space $X$ satisfying the gluing-orbit property and having positive topological entropy, then $f$ is sensitive.
\end{theorem}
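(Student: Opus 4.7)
The plan is to combine Sun's equivalence theorem (Theorem \ref{sunequivalencia}) with the sensitivity criterion of Proposition \ref{sensitivity}. The whole argument amounts to observing that positive entropy rules out uniform rigidity, while gluing-orbit automatically forces transitivity, and these two ingredients are exactly the hypotheses of Proposition \ref{sensitivity}.

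The first step is to check that the gluing-orbit property alone implies transitivity. Given $x,y\in X$ and $\eps>0$, let $M=M(\eps)$ be given by the gluing-orbit property and apply it to the finite orbit sequence $\mathscr{C}=\{(x,0),(y,0)\}$. One obtains $z\in X$ and a gap $g\in\{1,\ldots,M\}$ with $d(z,x)\leq \eps$ and $d(f^{g}(z),y)\leq \eps$, so $f^{g}(B(x,2\eps))\cap B(y,2\eps)\neq\emptyset$. Since $x$, $y$, and $\eps$ are arbitrary, $f$ is topologically transitive and, in particular, $\mathrm{Tran}(X,f)$ is dense in $X$.

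The second step is a direct appeal to the contrapositive of Theorem \ref{sunequivalencia}: since $f$ has the gluing-orbit property and $h(f)>0$, the map $f$ fails all four equivalent conditions, so in particular $f$ is not uniformly rigid.

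The third and final step is to invoke Proposition \ref{sensitivity}: a transitive map on a compact metric space that is not uniformly rigid is sensitive, with sensitivity constant $\gamma/4$, where $\gamma>0$ is furnished by Lemma \ref{sun_1}. This concludes the proof. There is no real obstacle here; everything has been set up in the preceding results, and the only minor verification is the standard deduction of transitivity from gluing-orbit done in the first step.
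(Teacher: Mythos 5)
Your proof is correct and follows essentially the same route as the paper's: rule out uniform rigidity via Theorem \ref{sunequivalencia}, note transitivity, and apply Proposition \ref{sensitivity}. The only difference is that you derive transitivity directly from the gluing-orbit property (a short and valid argument), whereas the paper cites Proposition 2.17(3) of \cite{sun2020zero} for that step.
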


\begin{proof}
    Theorem \ref{sunequivalencia} and 2.17(3) in \cite{sun2020zero} ensure that gluing-orbit and positive topological entropy imply that $f$ is topologically transitive and not uniformly rigid. Thus, Proposition \ref{sensitivity} ensures that $f$ is sensitive.
\end{proof}

We note that in the case of positive topological entropy, there is not a classification result such as Sun's classification in the zero topological entropy case. Thus, understanding consequences of gluing-orbit in this case is an interesting topic. A direct consequence of what we have proved is the following Auslander-Yorke dichotomy exchanging minimality by the gluing-orbit property.

\begin{theorem}\label{dicotomia}
If $f\colon X\rightarrow X$ is a map of a compact metric space satisfying the gluing-orbit property, then either $f$ is minimal and equicontinuous or $f$ is sensitive.
\end{theorem}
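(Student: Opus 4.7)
The plan is to dispatch the dichotomy by splitting on the topological entropy of $f$, using the two powerful results already in hand: Sun's classification (Theorem \ref{sunequivalencia}) in the zero-entropy case, and Theorem \ref{dicotomiagluing} in the positive-entropy case.

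First I would observe that $h(f)\in[0,\infty]$ falls into exactly one of two cases. If $h(f)=0$, then Theorem \ref{sunequivalencia} applies directly and yields that $f$ is equivalently minimal, equicontinuous, and uniformly rigid; in particular $f$ is minimal \emph{and} equicontinuous, which gives the first alternative of the conclusion. If instead $h(f)>0$, then Theorem \ref{dicotomiagluing} applies immediately (since $f$ has the gluing-orbit property by hypothesis) and gives that $f$ is sensitive, which is the second alternative.

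The two alternatives are in fact mutually exclusive, though the statement does not require this: if $f$ were simultaneously equicontinuous and sensitive with sensitivity constant $\eps>0$, then equicontinuity would provide $\delta>0$ with $B(x,\delta)\subset W^s_\eps(x)$ for every $x\in X$, directly contradicting the definition of sensitivity. So no point of $X$ lies in both cases, and the dichotomy is genuine.

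There is no real obstacle here since both auxiliary theorems are already established; the argument is just a two-line case split on $h(f)$. If anything, the only thing worth spelling out carefully is that Theorem \ref{dicotomiagluing} is the nontrivial step (it is where Proposition \ref{sensitivity} and Lemma \ref{sun_1} do the work), whereas the zero-entropy branch is a verbatim citation of Sun's classification. The write-up can therefore be kept very short.
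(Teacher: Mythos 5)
Your proof is correct and is essentially identical to the paper's: both split on whether $h(f)=0$ or $h(f)>0$, citing Theorem \ref{sunequivalencia} for the zero-entropy case and Theorem \ref{dicotomiagluing} for the positive-entropy case. The extra remark on mutual exclusivity is fine but not needed for the statement.
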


\begin{proof}
If $f$ has zero topological entropy, then it is minimal and equicontinuous (see Theorem \ref{sunequivalencia}), and if $f$ has positive topological entropy, we conclude sensitivity from Theorem \ref{dicotomiagluing}.
\end{proof}

We are ready to prove Theorem \ref{B}.

%

\begin{proof}[Proof of Theorem \ref{B}]
We assume that $f$ has gluing-orbit, so Theorem \ref{dicotomia} ensures that $f$ is either equicontinuous or sensitive. Proposition \ref{espaçoqueadmitegluing} ensures that the space $X$ is perfect since by hypothesis it is infinite. In the case $f$ is equicontinuous, $f^{-1}$ is also equicontinuous \cite{akin1997equicont}. 
Let $\eps_0$ be the diameter of $X$ and for each $\eps\in(0,\eps_0)$ choose $\delta>0$ such that 
$$B(x,\delta)\subset W^s_\eps(x)\cap W^u_\eps(x) \,\,\,\,\,\, \text{for every} \,\,\,\,\,\, x\in X.$$ Since $X$ is a perfect metric space, it follows that $\overline{B(x,\delta)}$ is a compact and perfect set which is contained in both $W^s_\eps(x)$ and $W^u_\eps(x)$. Now
assume that $f$ is sensitive with sensitivity constant $\eps_0>0$ and let $\eps\in(0,\eps_0)$. The gluing-orbit property assures the existence of $M=M(\eps)>0$ such that for every orbit sequence $\mathscr{C}$, there is a gap $\mathscr{G}\in\sum_M$ such that $(\mathscr{C},\mathscr{G})$ can be $\frac{\varepsilon}{2}$-traced. Given $x\in X$, the sensitivity of $f$ ensures the existence of $x_1\in X$ such that 
$$x_1\notin W^s_\varepsilon(f^n(x)) \,\,\,\,\,\, \text{for every} \,\,\,\,\,\, n\in\{1,\ldots M\}.$$ In fact, otherwise, $X=\bigcup_{n=1}^MW_\varepsilon^s(f^n(x))$ and then, since each local stable set is closed, $W^s_\varepsilon(f^n(x))$ would be an open subset of $X$ for every $n\in\{1,\ldots, M\}$, contradicting sensitivity of $f$. For each $l\in \mathbb{N}$, consider the orbit sequence
\[\{(f^{-l}(x),l), (x_1,l)\}.\] The gluing-orbit property ensures that there are $z_l\in X$ and \[\mathscr{G}_l=\{t_l\}\in \{1,2,\ldots, M\}\] such that 
$$d(f^{-n}(x),f^{-n}(z_l))\leq \frac{\varepsilon}{2} \,\,\,\,\,\, \text{and} \,\,\,\,\,\, d(f^{n}(x_1),f^{n+t_l}(z_l))\leq \frac{\varepsilon}{2}$$
for every $n\in\{0,1,\ldots,l\}$. We can choose a subsequence $\{z_{l_j}\}_{j\in\mathbb{N}}\subset \{z_l\}_{l\in\mathbb{N}}$ such that $t_{l_j}=t$ for any $j\in\mathbb{N}$ and that there exists $\displaystyle c_1(x)=\lim_{j\rightarrow \infty}z_{l_j}$. Thus, the following inequalities hold for every $n\in\mathbb{N}$:
\[\begin{array}{rcl}
d(f^{-n}(x),f^{-n}(c_1(x)))&=&\displaystyle d(f^{-n}(x),f^{-n}(\lim_{j\rightarrow \infty}z_{l_j}))\\
&=&\displaystyle\lim_{j\rightarrow \infty}d(f^{-n}(x),f^{-n}(z_{l_j}))\\
&\leq &\displaystyle\frac{\varepsilon}{2}.
\end{array}\] and
\[d(f^n(x_1),f^{n+t}(c_1(x)))<\frac{\varepsilon}{2}.\] These imply that $c_1(x)\in W^u_{\frac{\varepsilon}{2}}(x)$ and $f^t(c_1(x))\in W^s_{\frac{\varepsilon}{2}}(x_1)$.
Note that $c_1(x)\neq x$ since $f^t(c_1(x))\in W^s_{\frac{\varepsilon}{2}}(x_1)$ and $x_1\notin W^s_{\varepsilon}(f^t(x))$, and consider the set $$C_1=\{x, c_1(x)\}.$$
Let $\varepsilon_1>0$ be such that $$\eps_1<\min\left\{\frac{\eps}{4},\frac{d(x,c_1(x))}{2}\right\}$$
and choose $M_1$, given by the gluing-orbit property, such that for every orbit sequence $\mathscr{C}$, there is a gap $\mathscr{G}\in\sum_M$ such that $(\mathscr{C},\mathscr{G})$ can be ${\varepsilon_1}$-traced. As before, we can use the sensitivity of $f$ for each $y\in C_1$ to obtain $y_1(y)$ such that $$y_1(y)\notin W^s_\varepsilon(f^n(y)) \,\,\,\,\,\, \text{for every} \,\,\,\,\,\, n\in\{1,2,\ldots, M_1\}$$ 
and the gluing-orbit property to obtain $t_y\in\{1,\ldots,M\}$ and $c_2(y)$ such that $$c_2(y)\in W^u_{\varepsilon_1}(y) \,\,\,\,\,\, \text{and} \,\,\,\,\,\, f^{t_y}(c_2(y))\in W^s_{\varepsilon_1}(y_1).$$ 
Note that $c_2(y)\in W^u_\varepsilon(x)$ for every $y\in C_1$ since 
$$y\in W^u_{\frac{\varepsilon}{2}}(x), \,\,\,\,\,\, c_2(y)\in W^u_{\varepsilon_1}(y) \,\,\,\,\,\, \text{and} \,\,\,\,\,\, \frac{\eps}{2}+\eps_1<\eps.$$ 
Also, $c_2(y)\neq y$ since $f^{t_y}(c_2(y))\in W^s_{\varepsilon_1}(y_1(y))$ and $y_1(y)\notin W^s_{\varepsilon}(f^{t_y}(y))$. Moreover, 
$$c_2(y)\neq z \,\,\,\,\,\, \text{for each} \,\,\,\,\,\, z\in C_1$$ because $d(c_2(y),y)<\eps_1$ and $d(y,z)>\eps_1$ if $z\in C_1\setminus\{y\}$.
Thus, the set $$C_2=C_1\cup \{ c_2(y); \,\,y\in C_1\}$$ has $2^2$ elements, $C_2\subset W^u_\varepsilon(x)$ and for each $y\in C_{1}$ there is $c_2(y)\in C_2$ such that $$d(c_2(y),y)<\frac{\eps}{2^2}.$$
We can construct using an induction process an increasing sequence of sets $(C_k)_{k\in\mathbb{N}}$ such that $C_k$ has $2^k$ elements, $C_k\subset W^u_\varepsilon(x)$ and for each $y\in C_{k-1}$ there exists $c_k(y)\in C_k$ such that $$d(c_k(y),y)<\frac{\eps}{2^k}.$$
Thus, we can consider the set $$C_x=\overline{\bigcup_{k\geq 1}C_k},$$ that is a compact set contained in $W^u_\varepsilon(x)$, since $W^u_\varepsilon(x)$ is closed and $C_k\subset W^u_\varepsilon(x)$ for every $k\in\N$. To see that $C_x$ is perfect let $z\in C_x$. If $z\notin C_k$ for every $k\in\N$, then clearly $z$ is accumulated by points of $C_x$. If $z\in C_k$ for some $k\in\N$, then 
$$z\in C_n \,\,\,\,\,\, \text{for every} \,\,\,\,\,\, n\geq k,$$ since $(C_k)_{k\in\N}$ is an increasing sequence. Thus, for each $\alpha>0$ we can choose $N>k$ such that $$\frac{\eps}{2^N}<\alpha$$ and since $z\in C_N$ it follows that there exists $c_N(z)\in C_{N+1}$ satisfying
$$d(c_N(z),z)<\frac{\eps}{2^N}<\alpha.$$ So, for each $z\in C_x$ and $\alpha>0$ we can find $c_N(z)\in C_x$ such that $d(z,c_N(z))<\alpha$. This proves that $z$ is an accumulation point of $C_x$ and that $C_x$ is perfect. The construction of the set $D_x\subset W^s_\eps(x)$ is analogous, just notice that $f$ and $f^{-1}$ are either both equicontinuous or both sensitive because of Proposition \ref{gluingorbitinversa}, Theorem \ref{dicotomiagluing} and the fact that $h(f)=h(f^{-1})$ (see Proposition 10.1.12 in \cite{krerley2019entropy}). 
\end{proof}

As a corollary, we generalize Theorem \ref{ACT} exchanging transitivity and shadowing by the gluing-orbit property. Our conclusion is a bit stronger than the space being countable, indeed we prove that the space consists of a single periodic orbit.

\begin{corollary}\label{espaço=orbita}
If $f\colon X\rightarrow X$ is a positively countably expansive homeomorphism, defined on a compact metric space, satisfying the gluing-orbit property, then $X$ consists of a single periodic orbit.
\end{corollary}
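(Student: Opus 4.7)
The plan is to combine Theorem \ref{B} with Proposition \ref{espaçoqueadmitegluing}, using the basic topological fact that a non-empty compact perfect metric space is uncountable (a standard Baire category argument). The positive countable expansiveness of $f$ will be the property that rules out the infinite case, leaving only the periodic-orbit case permitted by Proposition \ref{espaçoqueadmitegluing}.

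More concretely, I would split the argument by whether $X$ is finite or infinite. First, suppose $X$ is finite. Then $X$ is not perfect (every point is isolated), so Proposition \ref{espaçoqueadmitegluing} forces $X$ to be a single periodic orbit, and we are done. Second, suppose $X$ is infinite. Let $\varepsilon_1>0$ be a constant of positive countable expansiveness for $f$, so that $W^s_{\varepsilon_1}(x)$ is countable for every $x\in X$. Let $\varepsilon_0>0$ be the constant provided by Theorem \ref{B} applied to $f$, and choose any $\varepsilon\in(0,\min\{\varepsilon_0,\varepsilon_1\})$. By Theorem \ref{B}, for each $x\in X$ there is a compact and perfect set $C_x\subset W^s_\varepsilon(x)$. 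The construction in the proof of Theorem \ref{B} produces $C_x$ as the closure of an increasing union of finite sets $C_k$ with $|C_k|=2^k$, so in particular $C_x$ is non-empty. Being a non-empty compact perfect subset of a metric space, $C_x$ is uncountable. Since $\varepsilon\le\varepsilon_1$, we have $W^s_\varepsilon(x)\subset W^s_{\varepsilon_1}(x)$, so $W^s_{\varepsilon_1}(x)$ contains the uncountable set $C_x$, contradicting positive countable expansiveness.

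This contradiction shows that the case $X$ infinite cannot occur, so $X$ must be finite and therefore a single periodic orbit by the first case. I do not expect any serious obstacle here, since all the substantive work has already been done in Theorem \ref{B} and Proposition \ref{espaçoqueadmitegluing}; the only subtlety is remembering to verify that $C_x$ is non-empty (so that ``perfect'' forces uncountability) and that smaller $\varepsilon$ gives smaller local stable sets, both of which are immediate.
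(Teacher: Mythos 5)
Your proposal is correct and follows essentially the same route as the paper: combine Proposition \ref{espaçoqueadmitegluing} with Theorem \ref{B} and the fact that a non-empty compact perfect metric space is uncountable, contradicting positive countable expansiveness. The only cosmetic difference is that you split on $X$ finite versus infinite, whereas the paper argues directly that if $X$ is not a single periodic orbit then it is perfect (hence infinite); your extra care about non-emptiness of $C_x$ and monotonicity of local stable sets in $\varepsilon$ is sound but not a substantive departure.
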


\begin{proof}
If $X$ is not a single periodic orbit, Proposition \ref{espaçoqueadmitegluing} ensures that $X$ is perfect and Theorem \ref{B} ensures the existence of perfect sets in the local stable sets, which are uncountable and contradicts the fact that $f$ is positively countably expansive.
\end{proof}



\section{Induced dynamics on the hyperspace}

In this section, we state necessary definitions, prove Theorem \ref{gluinghiper}, and analyze its consequences.

\begin{definition}[Hyperspace map]
Recall that $X$ is a compact metric space and consider the hyperspace of closed nonempty subsets of $X$:
\[2^X=\{A\subset X\;|\;A \mbox{ is nonempty and closed}\}\]
endowed with the Hausdorff distance
$$d_H(A,C)=\inf\{\eps>0; \,\,\, A\subset B(C,\eps) \,\,\, \text{and} \,\,\, C\subset B(A,\eps)\},$$
where $B(A,\eps)=\{x\in X; d(x,A)<\eps\}$ is the $\eps$-neighborhood of $A$.
Let $f\colon X\rightarrow X$ be a homeomorphism and define the \textit{induced hyperspace map} as the map $2^f:2^X\rightarrow 2^X$ given by $2^f(A)=f(A)$.
\end{definition}

\begin{definition}[Transitivity and Mixing]
A map $f\colon X\to X$ is called \emph{transitive}, if for any pair $U,V\subset X$ of non-empty open subsets, there exists $n\in\N$ such that $$f^n(U)\cap V\neq\emptyset.$$ It is called \emph{topologically mixing} if for any pair $U,V\subset X$ of non-empty open subsets, there exists $n\in\N$ such that $$f^k(U)\cap V\neq\emptyset \,\,\,\,\,\, \text{for every} \,\,\,\,\,\, k\geq n.$$
\end{definition}

\begin{proof}[Proof of Theorem \ref{gluinghiper}]
First, we assume that $2^f$ has the gluing-orbit property and prove that $f$ has the gluing-orbit property. Given $\eps>0$, let $M=M(\eps/2)$ be the constant given by the gluing-orbit property for $2^f$. Consider a finite orbit sequence for $f$
\[\mathscr{C}=\{(x_j,m_j)\}_{1\leq j\leq k}\] and note that
\[\mathscr{C}'=\{(\{x_j\},m_j)\}_{1\leq j\leq k}\] is a finite orbit sequence for $2^f$. Then, by the gluing-orbit property of $2^f$, there are $A\in 2^X$ and a gap $\mathscr{G}'=\{t_j\}_{1\leq j\leq k}$ such that $(\mathscr{C}',\mathscr{G}')$ is $\frac{\eps}{2}$-traced by $A$, that is,
\[d_H((2^f)^{s_j+l}(A),(2^f)^{l}(\{x_j\}))=d_H(f^{s_j+l}(A),f^l(\{x_j\}))\leq \frac{\eps}{2},\]
for every $l\in\{0,1,\ldots,m_j\}$ and $j\in\{1\dots,k\}$, where $s_1:=0$ and $\displaystyle s_j:=\sum_{i=1}^{j-1}(m_i+t_i)$. In particular, any $z\in A$ satisfies:
\[d(f^{s_j+l}(z), f^l(x_j))\leq \eps\;\;\;\mbox{for every }\;\;\;l\in\{0,1,\ldots,m_j\},\] that is, $(\mathscr{C},\mathscr{G}')$ is $\eps$-traced by any point of $A$, which proves that $f$ has the gluing-orbit property. Now, we assume that $2^f$ has the gluing-orbit property and prove that $f$ is topologically mixing. Let $U,V$ be non-empty open subsets of $X$ and choose $\varepsilon>0$ and points $x,y\in X$ such that $$B(x,\varepsilon)\subset U \,\,\,\,\,\, \text{and} \,\,\,\,\,\, B(y,\varepsilon)\subset V.$$ Let $M=M(\eps)$ be the constant given by the gluing-orbit property of $2^f$. We will prove that 
$$f^{M+k}(U)\cap V\neq \emptyset \,\,\,\,\,\, \mbox{ for every } \,\,\,\,\,\, k\geq 0.$$ Since this can be done for each pair of non-empty open subsets, it proves that $f$ is topologically mixing. For each $k\geq 0$, consider the set
$$E_k=\{y,f^{-1}(y),\ldots, f^{-M-k}(y)\}$$ and the orbit sequence $\{(\{x\},0),(E_k,M+k)\}$. Since $2^f$ has the gluing-orbit property, for each $k\geq 0$, there exist $A_k\in 2^X$ and a gap $N_k\in\{0,\dots,M\}$ such that $\{(\{x\},0),(E_k,M+k)\}$ is $\eps$-traced by $A_k$ with gap $N_k$, that is, $d_H(A_k,\{x\})<\eps$ and
\[d_H((2^f)^i(E_k), (2^f)^{i+N_k}(A_k)) = d_H(f^i(E_k),f^{i+N_k}(A_k))<\eps\]
for every $i\in\{0,\dots,M+k\}$. Note that 
$$f^i(E_k)=\{f^i(y),f^{i-1}(y),\ldots, f^{i-M-k}(y)\}$$ 
and that 
$$y\in f^i(E_k) \,\,\,\,\,\, \text{for every} \,\,\,\,\,\, i\in\{0,\ldots, M+k\}.$$ Thus, the previous inequalities ensure that for each $i\in\{0,\ldots,M+k\}$ there exists 
$$y_i\in f^{i+N_k}(A_k)\cap B(y,\eps),$$ and since $f^{-(i+N_k)}(y_i)\in A_k\subset B(x,\eps)$, it follows that
$$y_i\in f^{i+N_k}(U)\cap V.$$ In particular, for $i=M+k-N_k$ we have
$$y_{M+k-N_k}\in f^{M+k}(U)\cap V$$ and this finishes the proof. 

\end{proof}




As observed in the introduction, the following corollary is a direct consequence of Theorem \ref{gluinghiper}. 

\begin{corollary}\label{Cornot}
If a transitive map has gluing-orbit but is not topologically mixing, then $2^f$ does not have gluing-orbit.
\end{corollary}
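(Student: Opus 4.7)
The plan is to derive this directly as the contrapositive of Theorem \ref{gluinghiper}. Assume for contradiction that $f$ is transitive, has the gluing-orbit property, is not topologically mixing, yet $2^f$ does have the gluing-orbit property. Applying Theorem \ref{gluinghiper} to $2^f$ immediately yields that $f$ must be topologically mixing, contradicting the hypothesis. Hence no such $f$ can exist and $2^f$ fails to have gluing-orbit.

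Because the conclusion is extracted from an already proved implication, there is no genuine obstacle here. The transitivity hypothesis is not strictly needed for the logical derivation (Theorem \ref{gluinghiper} does not require it), but it is a natural assumption in this context: transitive systems form the setting where gluing-orbit behaves well, and it makes the statement non-vacuous by guaranteeing the existence of systems that have gluing-orbit without being topologically mixing (e.g.\ irrational rotations and more generally minimal equicontinuous homeomorphisms, as discussed in the introduction). I would therefore present the argument in two short sentences: invoke Theorem \ref{gluinghiper} in its contrapositive form, and observe that the assumed failure of mixing for $f$ forbids gluing-orbit for $2^f$.

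If desired, one could follow the corollary with an explicit example (as is done in Example \ref{Example} and the surrounding discussion) to emphasize that the hypothesis of the corollary is non-empty; this is not part of the proof, but serves to make the result concrete and explains the word \emph{surprising} used in the introduction, since the analogous phenomenon fails for the shadowing and specification properties.
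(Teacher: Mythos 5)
Your proof is correct and is exactly the paper's approach: the corollary is simply the contrapositive of Theorem \ref{gluinghiper}, and the paper likewise presents it as a direct consequence with no further argument. Your side remark that the transitivity hypothesis is not logically needed (it serves only to situate the statement among non-vacuous examples) is also accurate.
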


We present below a classical example of a transitive but not mixing shift of finite type that is included in this case (see \cite[Example 3.1]{bomfim2019gluing}).

\begin{example}\label{Example}
    Consider the matrix
    \[M=\left(\begin{array}{cccc}
    0&1&0&0\\
    1&0&1&0\\
    0&1&0&1\\
    0&0&1&0
    \end{array}\right)\]
and let $M(i,j)$ be the element of $M$ which belongs to row $i$ and column $j$. The following direct graph is associated to this matrix by considering the set of vertices $\{1,2,3,4\}$ and an edge is drawn from $i$ to $j$ if $M(i,j)=1$. 
\begin{figure}[h!]
\centering
\begin{tikzpicture}[
    node distance=1cm,
    every node/.style={circle, draw, minimum size=0.5cm},
    every path/.style={->, thick}
]
    \node (1) {1};
    \node (2) [right=of 1] {2};
    \node (3) [right=of 2] {3};
    \node (4) [right=of 3] {4};

    \draw (1) edge[bend left] (2);
    \draw (2) edge[bend left] (1);
    \draw (2) edge[bend left] (3);
    \draw (3) edge[bend left] (2);
    \draw (3) edge[bend left] (4);
    \draw (4) edge[bend left] (3);
\end{tikzpicture}
\caption{Graph associated to the matrix $M$}
\label{fig:diagram}
\end{figure}

Consider the set of admissible sequences $$\Omega_M=\{(x_k)_{k\in\mathbb{Z}}\in\{1,2,3,4\}^{\Z}\;;\; M(x_k,x_{k+1})=1\}$$ that are obtained as two-sided paths on the graph above. The shift map $\sigma\colon\Omega_M\rightarrow \Omega_M$ defined by $\sigma((x_k)_{k\in\mathbb{Z}})=(x_{k+1})_{k\in\mathbb{Z}}$
is topologically transitive since topological transitivity of $\sigma$ is the same as irreducibility of $M$ (see Example 6.3.2 in \cite{douglas1995symbolic}). Recall that a non-negative matrix $A$ is irreducible if for each pair of indices $(i,j)$, there exists some $n\geq0$ such that $A^n(i,j)>0$. We can easily see that $M$ is irreducible considering the iterates 

\[M^2=\left(\begin{array}{cccc}
    1&0&1&0\\
    0&2&0&1\\
    1&0&2&0\\
    0&1&0&1
    \end{array}\right) \;\;\mbox{ and }\;\;M^3=\left(\begin{array}{cccc}
    0&2&0&1\\
    2&0&3&0\\
    0&3&0&2\\
    1&0&2&0
    \end{array}\right).\]
This ensures that $\sigma$ has the gluing-orbit property by \cite{bomfim2019gluing}*{Example 3.1}. But $\sigma $ is topologically mixing if, and only if, $M$ is irreducible and of period $1$ (see Proposition 4.5.10 in \cite{douglas1995symbolic}). Recall that the period of a state $i\in\{1,2,3,4\}$, denoted by $per(i)$, is the greatest common divisor of those integers $n\geq1$ for which $M^n(i,i)>0$ (if no such integer exist we let $per(i)=\infty$). The period of $M$, denoted by $per(M)$, is the greatest common divisor of the numbers $(per(i))_{i\in\{1,2,3,4\}}$ that are finite. 
Note that $M^n(i,j)$ represents the number of paths on the graph starting at the vertex $i$ and ending at the vertex $j$ in exactly $n$ steps. Thus, for each $i\in\{1,2,3,4\}$, $M^n(i,i)>0$ if, and only if, $n$ is even.
Consequently, $per(i)=2$ for every $i\in\{1,2,3,4\}$ and $per(M)=2$. This proves that $\sigma$ is not topologically mixing and
Theorem \ref{gluinghiper} (Corollary \ref{Cornot}) ensures that $2^{\sigma}$ does not have the gluing-orbit property.
\end{example}

\hspace{-0.4cm}\textbf{Acknowledgments.}
Bernardo Carvalho was supported by Progetto di Eccellenza MatMod@TOV grant number PRIN 2017S35EHN and Mayara Antunes, Welington Cordeiro, and Jos\'e Cueto were also supported by Fapemig grant number APQ-00036-22.

\vspace{1.5cm}
\noindent

{\em M. Antunes}
\vspace{0.2cm}

\noindent

Departamento de Ciências Exatas,

Universidade Federal Fluminense - UFF

Avenida dos trabalhadores, 420, Vila Santa Cecília

Volta Redonda - RJ, Brasil.

\vspace{0.2cm}

\email{mayaraantunes@id.uff.br}

\vspace{1.0cm}
{\em B. Carvalho}
\vspace{0.2cm}

\noindent

Dipartimento di Matematica,

Università degli Studi di Roma Tor Vergata

Via Cracovia n.50 - 00133

Roma - RM, Italy
\vspace{0.2cm}

\email{carvalho@mat.uniroma2.it}

\vspace{1.0cm}
{\em W. Cordeiro}
\vspace{0.2cm}

\noindent

Instituto de Matem\'atica

Universidade Federal do Rio de Janeiro

Av. Athos da Silveira Ramos, 149 - Cidade Universitária 

Rio de Janeiro - RJ, Brazil

\vspace{0.2cm}

\email{welingtonscordeiro@gmail.com }

\vspace{1.0cm}
{\em J. Cueto}
\vspace{0.2cm}

\noindent

Departamento de Matem\'atica,

Universidade Federal de Minas Gerais - UFMG

Av. Ant\^onio Carlos, 6627 - Campus Pampulha

Belo Horizonte - MG, Brazil.

\vspace{0.2cm}

\email{joscueto@hotmail.com }

\end{document}